\documentclass[]{amsart}
\usepackage{amsmath,amsthm,hyperref,amsfonts,amssymb}
\usepackage{pxfonts}
\hypersetup{
    colorlinks=true,       
    linkcolor=blue,          
    citecolor=cyan,        
}
\usepackage{verbatim}
\usepackage{url}

\makeatother

\newtheorem{theorem}{Theorem}[section]
\newtheorem{lemma}{Lemma}[section]
\newtheorem{corollary}[lemma]{Corollary}
\newtheorem{proposition}[lemma]{Proposition}

\theoremstyle{remark}

\usepackage{tikz}
\usetikzlibrary{decorations.markings}
\makeatother

\def\P{\partial}

\def\ri{\mathrm i}

\def\rb{\mathbb R}

\def\zb{\mathbb Z}

\numberwithin{equation}{section}


\title{Monotonicity properties related to the ratio of two gamma functions}
\author{Nian Hong Zhou}
\address{School of Mathematics and Statistics, Guangxi Normal University\\
No.1 Yanzhong Road, Yanshan District, Guilin, 541006\\
Guangxi, PR China}
\email{nianhongzhou@outlook.com; nianhongzhou@gxnu.edu.cn}

\author{Da-Wei Niu}
\address{Department of Science,  Henan University of Animal Husbandry and Economy,
No. 6 North Longzihu Road, Henan 450046, Zhengdong District, Zhengzhou, PR China}
\email{nnddww@163.com}

\keywords{Euler gamma function, Bernoulli polynomials, Completely monotonic functions, Inequality, Approximation.}
\subjclass[2010]{Primary: 33B15; Secondary:  11B68, 26A48, 26D15, 41A60.}
\date{} 

\begin{document}

\begin{abstract}
In this paper we investigate the monotonicity properties related to the ratio of gamma functions, from which some related asymptotics and inequalities are established.  Some special cases also confirm the conjectures of C.-P.~Chen [Monotonicity properties, inequalities, and asymptotic expansions associated with the gamma function. Appl. Math. Comput. 283 (2016), 385--396.].
\end{abstract}

\maketitle

\section{Introduction}
To find an algebraic expression that takes the value $n!$ at each non-negative
integer $n$, Euler \cite{Euler} obtained the following well-defined convergent infinite product expression:
\begin{equation}\label{eq11}
\Gamma(1+x):=\prod_{k=1}^{\infty}\frac{k^{1-x}(k+1)^x}{k+x}=\prod_{k=1}^{\infty}\left(1+\frac{x}{k}\right)^{-1}\left(1+\frac{1}{k}\right)^{-x}, ~(x\ge 0).
\end{equation}
This expression $\Gamma(1+n)=n!$ holds for each non-negative
integer $n$ and the functional equation
\begin{equation}\label{eq12}
\Gamma(x)=x^{-1}\Gamma(x+1)
\end{equation}
for each $x\ge 1$. Since $\Gamma(1+x)$ is well defined for all $x\ge 0$, one can define a function $\Gamma(x)=x^{-1}\Gamma(1+x)$ for all $x>0$, and it is so called Euler's gamma function. Euler \cite{Euler} further found that $\Gamma(3/2)^2$ is the area of a circle with diameter $1$, which related to a work of Wallis \cite{Wallis}
\begin{equation}\label{eq13}
\Gamma^2\left(\frac{3}{2}\right)=\frac{2\cdot 4}{3\cdot 3}\cdot\frac{4\cdot 6}{5\cdot 5}\cdot\frac{6\cdot 8}{7\cdot 7}\dots =\frac{\pi}{4}.
\end{equation}

In view of \eqref{eq11} and \eqref{eq12}, formula \eqref{eq13} can be rewritten as
$$\sqrt{\pi}=\frac{1\cdot 2\cdot 3\dots n}{(1-1/2)\cdot(2-1/2)\dots (n-1/2)}\frac{\Gamma(n+1/2)}{\Gamma(n+1)},$$
for each integer $n\ge 1$. Therefore to approximate $\pi$ numerically, some problems are arising such as the asymptotics, inequalities and monotonicity of the ratio of two gamma functions. Those problems have a long history and have been investigated by many authors.  We refer the reader to, for examples, Tricomi and Erd\'{e}lyi \cite{MR0043948}, Fields \cite{MR0200486}, Frenzen \cite{MR883576, MR1147874}, Buri\'{c} and Elezovi\'{c} \cite{MR2784046} for asymptotic expansions, Bustoz and Ismail \cite{MR856710}, Alzer \cite{MR1388887}, and Chen and Qi \cite{MR2093060} for inequalities,  Qi and Chen \cite{MR2075188},  Koumandos \cite{MR2266574, MR2429884}, Koumandos and Pedersen \cite{MR3002606}  Mortici et al. \cite{MR3213683}, Chen and Paris \cite{MR3648484, MR3285558} and Chen \cite{MR3338412} for monotonicity. For more related works we refer the reader to the exhaustive survey \cite{MR2611044} of Qi and references therein.

Let $I\subset \rb$ be an open interval. We recall that an infinitely differentiable function $f(x)$ is said to be completely monotonic over $I$ if $(-1)^nf(x)\ge 0$ for all $x\in I$ and all integers $n\ge 0$. We also recall (Bernstein's Theorem) that $f$ is completely monotonic over $I\subset \rb$  if and only if
$$f(x)=\int_{I}e^{-xt}\,d\mu(t),$$
where $\mu(t)$ is a nonnegative measure such that the integral converges for all $x\in I$.

In \cite{MR2075188}, Qi and Chen proved that
$$\log\left[\frac{\Gamma(x+1)}{(x+1/4)\Gamma(x+1/2)}\right]$$
was completely monotonic over $(0,\infty)$. As a consequence, \cite{MR2075188} gave the following sharp inequality
$$\frac{1}{\sqrt{\pi n+4-\pi}}\le \frac{(1-1/2)\cdot(2-1/2)\dots (n-1/2)}{1\cdot 2\cdot 3\dots n}< \frac{1}{\sqrt{\pi n+\pi/4}},$$
for all integer $n\ge 1$. Motivated by \cite{MR2075188}, Mortici et al. \cite[Theorem 1, 2]{MR3213683} proved that the functions
$$\log\left[\frac{\left(\frac{1}{2\pi}\sqrt{3}\Gamma(2/3)\right)^3}{x^2\left(\frac{\Gamma(x+1/3)}{\Gamma(x+1)\Gamma(1/3)}\right)^3}\right]\;\mbox{and}\; \log\left[\frac{\left(\Gamma(2/3)\right)^{-3}}{x\left(\frac{\Gamma(x+2/3)}{\Gamma(x+1)\Gamma(2/3)}\right)^3}\right]$$
were completely monotonic on $(0,\infty),$ from which \cite[Corollary 1, 2]{MR3213683} gave the following inequalities
$$\frac{1/3}{\sqrt[3]{n^{2}}}\le \frac{(1-2/3)\cdot(2-2/3)\dots (n-2/3)}{1\cdot 2\cdot 3\dots n}< \frac{\frac{1}{2\pi}\sqrt{3}\Gamma(2/3)}{\sqrt[3]{n^{2}}}$$
and
$$\frac{2/3}{\sqrt[3]{n}}\le \frac{(1-1/3)\cdot(2-1/3)\dots (n-1/3)}{1\cdot 2\cdot 3\dots n}< \frac{1/\Gamma(2/3)}{\sqrt[3]{n}}.$$
Mortici et al. \cite[Theorem 3, 4]{MR3213683} also obtained further improvements of the above inequalities.

For each integer $n\ge 0$ and all $x\in\rb$, The $n$th Bernoulli polynomial $B_n(x)$ is defined by
\begin{equation}\label{def: B_n}
\sum_{n\ge 0}\frac{B_n(x)}{n!}t^n:=\frac{te^{tx}}{e^t-1}, ~~|t|<2\pi,
\end{equation}
and the the $n$th Bernoulli number $B_n:=B_n(0)$. Chen and Paris \cite{MR3648484} considered
\begin{equation*}
F_m(t)=(-1)^m\left(\log\left(\frac{\Gamma(t+1)}{t^{1/2}\Gamma(t+1/2)}\right)-\sum_{j=1}^m\left(1-\frac{1}{2^{2\ell}}\right)\frac{B_{2\ell}}{\ell(2\ell-1)}\frac{1}{t^{2\ell-1}}\right)
\end{equation*}
and proved the completely monotonicity of $F_m(t)$ on $(0,\infty)$ for each integer $m\ge 0$. Chen in \cite{MR3338412} further considered for all integer $m\ge 0$ the functions
\begin{equation*}
U_m(t):=(-1)^m\left(\log\left(\frac{\Gamma(t+2/3)}{t^{1/3}\Gamma(t+1/3)}\right)-\sum_{j=1}^m\frac{B_{2\ell+1}(1/3)}{\ell(2\ell+1)}\frac{1}{t^{2\ell}}\right)
\end{equation*}
and
\begin{equation*}
V_m(t):=(-1)^m\left(\log\left(\frac{\Gamma(t+3/4)}{t^{1/2}\Gamma(t+1/4)}\right)-\sum_{j=1}^m\frac{B_{2\ell+1}(1/4)}{\ell(2\ell+1)}\frac{1}{t^{2\ell}}\right).
\end{equation*}
Here and throughout this paper, the empty sum, such as $\sum_{1\le j\le 0}:=0$. He then conjectured that $U_m(t)$ and $V_m(t)$ are completely monotonic on $(0,\infty)$ and verified the cases of $m\in\{0,1,2,3\}$. The conjecture for $V_m(t)$ has been proved by Chen and Paris in \cite{MR3285558}.

Motivated by Chen and Paris \cite{MR3648484} and Chen \cite{MR3338412}, we consider the following functions:
\begin{equation}\label{def:G}
G_m(x,t):=(-1)^{m-1}\left(\log\left(\frac{\Gamma(t+1-x)\Gamma(t+x)}{\Gamma(t)\Gamma(t+1)}\right)-\sum_{\ell=1}^m\frac{B_{2\ell}(x)-B_{2\ell}}{\ell(2\ell-1)}\frac{1}{t^{2\ell-1}}\right)
\end{equation}
and
\begin{equation}\label{def:H}
H_m(x,t):=(-1)^m\left(\log\left(\frac{\Gamma(t+1-x)}{t^{1-2x}\Gamma(t+x)}\right)-\sum_{\ell=1}^m\frac{B_{2\ell+1}(x)}{\ell(2\ell+1)}\frac{1}{t^{2\ell}}\right),
\end{equation}
for each integer $m\ge -1$, $x\in[0,1/2]$ and $t>0$. It is easy to check that $G_m(1/2,t)=2F_m(t)$, $H_m(1/3,t)=U_m(t)$ and $H_m(1/4,t)=V_m(t)$.

In this paper, we prove some inequalities related to $G_m(x,t)$ and $H_m(x,t)$. For example (Theorem \ref{main0}), we prove
\begin{align*}
\exp&\left(\sum_{\ell=1}^{2m_1+1}\frac{B_{2\ell}(x)-B_{2\ell}}{\ell(2\ell-1)}\frac{1}{t^{2\ell-1}}\right)\\
&\qquad\qquad \qquad\qquad < \frac{\Gamma(t+1-x)\Gamma(t+x)}{\Gamma(t)\Gamma(t+1)}<  \exp\left(\sum_{\ell=1}^{2m_2}\frac{B_{2\ell}(x)-B_{2\ell}}{\ell(2\ell-1)}\frac{1}{t^{2\ell-1}}\right)
\end{align*}
and
$$\exp\left(\sum_{\ell=1}^{2m_1}\frac{B_{2\ell+1}(x)}{\ell(2\ell+1)}\frac{1}{t^{2\ell}}\right) < \frac{\Gamma(t+1-x)}{t^{1-2x}\Gamma(t+x)}< \exp\left(\sum_{\ell=1}^{2m_2+1}\frac{B_{2\ell+1}(x)}{\ell(2\ell+1)}\frac{1}{t^{2\ell}}\right),$$
hold for any given integers $m_1,m_2\ge 0$, $x\in(0,1/2)$ and $t>0$. We further prove that $G_m(x,t)$ and $H_m(x,t)$ are completely monotonic on $t\in (0,\infty)$ for each $x\in[0,1/2]$ and each integer $m\ge 0$. Some special cases confirm the conjecture of Chen \cite{MR3338412}.

We point out that the proof of the main results of this paper is self-contained. It just requires the infinite product expression \eqref{eq11} of the gamma function, and some basic knowledge of Fourier series and calculus.


\section{Primary results}
We first use basic knowledge of Fourier analysis to prove
\begin{proposition}\label{pro23}For all $u>0$ we have
\begin{equation}\label{eq26}
\frac{u\cosh[(x-1/2)u]}{2\sinh[u/2]}=1+2\sum_{m\ge 1}\frac{u^2\cos(2\pi mx)}{u^2+4\pi^2m^2},~~x\in[0,1].
\end{equation}
\end{proposition}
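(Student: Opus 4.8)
The plan is to read the claimed identity as the Fourier expansion, in the variable $x$ on the interval $[0,1]$ with period $1$, of its left-hand side. Fix $u>0$ and set
$$f(x):=\frac{u\cosh[(x-1/2)u]}{2\sinh(u/2)}, \qquad x\in[0,1].$$
Since $\cosh[(x-1/2)u]$ is even about $x=1/2$, the function $f$ is symmetric about the midpoint of the interval, and its values at the two endpoints agree, namely $f(0)=f(1)=\tfrac{u}{2}\coth(u/2)$. Hence the $1$-periodic extension of $f$ is continuous, and since $f$ is moreover real-analytic on $[0,1]$ it is piecewise continuously differentiable. By the standard convergence theorem for Fourier series of continuous, piecewise $C^1$ periodic functions, the Fourier series of $f$ converges to $f$ uniformly on $[0,1]$; this is the only analytic input required and it disposes of all convergence questions, including at the endpoints where the periodic extension has a corner.

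It then remains to compute the Fourier coefficients $c_m=\int_0^1 f(x)e^{-2\pi i m x}\,dx$ for $m\in\zb$. Substituting $y=x-1/2$ and using that $\cosh(yu)$ is even while $\cosh(yu)\sin(2\pi my)$ is odd, the integral reduces to
$$c_m=\frac{u(-1)^m}{2\sinh(u/2)}\int_{-1/2}^{1/2}\cosh(yu)\cos(2\pi m y)\,dy.$$
The remaining integral is elementary: an antiderivative of $\cosh(\alpha y)\cos(\beta y)$ is $\dfrac{\alpha\sinh(\alpha y)\cos(\beta y)+\beta\cosh(\alpha y)\sin(\beta y)}{\alpha^2+\beta^2}$, and evaluating with $\alpha=u$, $\beta=2\pi m$ between $\pm 1/2$ (where $\sin(\pi m)=0$) gives
$$\int_{-1/2}^{1/2}\cosh(yu)\cos(2\pi m y)\,dy=\frac{2u(-1)^m\sinh(u/2)}{u^2+4\pi^2m^2}.$$
Multiplying through, the factors $\sinh(u/2)$ and the signs $(-1)^m$ cancel, leaving the clean formula $c_m=\dfrac{u^2}{u^2+4\pi^2m^2}$, valid for every $m\in\zb$; in particular $c_0=1$.

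Finally I would assemble the series. Because $c_m=c_{-m}$, pairing the $m$ and $-m$ terms of $\sum_{m\in\zb}c_m e^{2\pi i m x}$ collapses the exponentials into cosines, so
$$f(x)=c_0+\sum_{m\ge 1}c_m\big(e^{2\pi i m x}+e^{-2\pi i m x}\big)=1+2\sum_{m\ge 1}\frac{u^2\cos(2\pi m x)}{u^2+4\pi^2m^2},$$
which is exactly \eqref{eq26}. I do not expect any genuine obstacle: the computation of $c_m$ is routine, and the one conceptual point---that the series represents $f$ pointwise on all of $[0,1]$, endpoints included---is handled in advance by the uniform-convergence theorem, which applies precisely because the periodic extension of $f$ is continuous even though it is only piecewise smooth.
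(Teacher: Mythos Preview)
Your proof is correct and, like the paper's, proceeds by computing the Fourier series of the left-hand side as a $1$-periodic function of $x$. The one tactical difference lies in how the Fourier coefficients are obtained: you integrate $\cosh(yu)\cos(2\pi my)$ directly on $[-1/2,1/2]$ via an explicit antiderivative, whereas the paper first rewrites $\dfrac{\cosh[(x-1/2)u]}{\sinh(u/2)}=\sum_{n\in\zb}e^{-|n+x|u}$ and then unfolds the Fourier integral $\int_0^1$ over the $\zb$-sum into a single integral $\int_{\rb}e^{-|v|u-2\pi i m v}\,dv$, a Poisson-summation style manoeuvre. Your route is more self-contained and avoids the auxiliary identity, at the cost of one routine antiderivative computation; the paper's route is slicker once the unfolding is spotted and makes the connection to the Laplace transform of $e^{-|v|u}$ transparent. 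Either way the coefficients come out as $u^2/(u^2+4\pi^2m^2)$ and the result follows.
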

\begin{proof}We first note that for all $u>0$ and $x\in[0,1]$,
\begin{align*}
\frac{\cosh[(x-1/2)u]}{\sinh[u/2]}&=\frac{e^{x u}+e^{(1-x)u}}{e^u-1}\\
&=\sum_{n\ge 0}e^{-(n+1-x)u}+\sum_{n\ge 0}e^{-(n+x)u}=\sum_{n\in\zb}e^{-|n+x|u}.
\end{align*}
Now, let $u>0$ and $x\in\rb$, we define
$$f(x)=\sum_{n\in\zb}e^{-|n+x|u}.$$
Clearly, $f(x)$ is a periodic function with period $1$ about $x\in\rb$, and continuous in $\rb$. Hence by Fourier analysis, for all $x\in\rb$,
\begin{align*}
f(x)&=\sum_{m\in\zb}e^{2\pi\ri mx}\int_{0}^{1}f(v)e^{-2\pi\ri mv}\,dv\\
&=\sum_{m\in\zb}e^{2\pi\ri mx}\int_{\rb}e^{-|v|u-2\pi\ri mv}\,dv=\sum_{m\in\zb}e^{2\pi\ri mx}\left(\frac{1}{u+2\pi\ri m}+\frac{1}{u-2\pi\ri m}\right)\\
&=2u\sum_{m\in\zb}\frac{e^{2\pi\ri mx}}{u^2+4\pi^2m^2}=\frac{2}{u}+4u\sum_{m\ge 1}\frac{\cos(2\pi mx)}{u^2+4\pi^2m^2}.
\end{align*}
This completes the proof.
\end{proof}
We need the following Proposition.
\begin{proposition}\label{pro22}For $-2\pi<u<2\pi$ and $x$ be real number we have
\begin{equation}\label{eq23}
\frac{u\cosh[(x-1/2)u]}{2\sinh[u/2]}=\sum_{k\ge 0}\frac{B_{2k}(x)}{(2k)!}u^{2k}
\end{equation}
and
\begin{equation}\label{eq24}
\frac{\sinh[(x-1/2)u]}{2\sinh[u/2]}=\sum_{k\ge 0}\frac{B_{2k+1}(x)}{(2k+1)!}u^{2k}.
\end{equation}
In particular, for all integers $k\ge 0$ we have,
\begin{equation}\label{eq25}
(k+1)B_k(x)=B_{k+1}'(x).
\end{equation}
\end{proposition}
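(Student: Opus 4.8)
The plan is to read both identities \eqref{eq23} and \eqref{eq24} directly off the generating function \eqref{def: B_n}, by separating it into its even and odd parts as a function of $u$. First I would record the elementary algebraic identity that converts the generating function into hyperbolic form: writing $2\sinh[u/2]=e^{u/2}-e^{-u/2}$ and multiplying numerator and denominator by $e^{u/2}$, one finds for $0<|u|<2\pi$ that
$$\frac{ue^{(x-1/2)u}}{2\sinh[u/2]}=\frac{ue^{xu}}{e^u-1}=\sum_{n\ge 0}\frac{B_n(x)}{n!}u^n,$$
the last equality being precisely \eqref{def: B_n}. The restriction $|u|<2\pi$ guarantees convergence of the series, since $t/(e^t-1)$ is analytic on that disc, and this is exactly what pins down the stated range $-2\pi<u<2\pi$.

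Next I would split the exponential in the numerator via $e^{(x-1/2)u}=\cosh[(x-1/2)u]+\sinh[(x-1/2)u]$, which decomposes the left-hand side as
$$\frac{u\cosh[(x-1/2)u]}{2\sinh[u/2]}+\frac{u\sinh[(x-1/2)u]}{2\sinh[u/2]}.$$
Since $u/(2\sinh[u/2])$ is an even function of $u$, the first summand is even in $u$ and the second is odd. Matching the even part against the even-degree terms of $\sum_n B_n(x)u^n/n!$, and the odd part against the odd-degree terms (using uniqueness of the power-series expansion of an analytic function on $|u|<2\pi$), yields \eqref{eq23} at once and, after cancelling a factor of $u$ from the odd identity, yields \eqref{eq24}. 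The only point needing a word of care is the cancellation of $u$: the odd identity holds as an equality of analytic functions for $u\ne 0$ and extends to $u=0$ by continuity, so dividing by $u$ is legitimate on the whole disc.

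Finally, for the recurrence \eqref{eq25} I would differentiate the two established identities with respect to $x$. Differentiating \eqref{eq24} uses $\frac{d}{dx}\sinh[(x-1/2)u]=u\cosh[(x-1/2)u]$, so its left-hand side turns into the left-hand side of \eqref{eq23}; comparing the $u^{2k}$-coefficients of $\sum_k B_{2k+1}'(x)u^{2k}/(2k+1)!$ with those of \eqref{eq23} gives $B_{2k+1}'(x)=(2k+1)B_{2k}(x)$. Symmetrically, differentiating \eqref{eq23} produces $u^2$ times the left-hand side of \eqref{eq24}, and matching coefficients of $u^{2k+2}$ gives $B_{2k+2}'(x)=(2k+2)B_{2k+1}(x)$. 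Together these two families cover both parities and establish $(k+1)B_k(x)=B_{k+1}'(x)$ for every integer $k\ge 0$.

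I anticipate no serious obstacle: the argument is a direct decomposition of a known generating function together with termwise differentiation. The only genuine subtleties are bookkeeping ones — keeping track of which summand is even and which is odd in $u$, and justifying the termwise operations by the analyticity of $t/(e^t-1)$ on $|t|<2\pi$.
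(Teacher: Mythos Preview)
Your argument is correct and close in spirit to the paper's, but the decomposition is organized differently. You start from the single generating function $\frac{ue^{(x-1/2)u}}{2\sinh[u/2]}=\sum_n B_n(x)u^n/n!$ and split it into its even and odd parts in $u$ via $e^{(x-1/2)u}=\cosh+\sinh$. The paper instead writes each of the two left-hand sides separately as $\frac{u(e^{xu}\pm e^{(1-x)u})}{e^u-1}$, obtaining expansions with coefficients $B_k(x)\pm B_k(1-x)$; the evenness in $u$ of both expressions then forces the reflection formula $B_k(x)=(-1)^kB_k(1-x)$, from which \eqref{eq23} and \eqref{eq24} follow. Your route is slightly more direct for the stated identities; the paper's route has the advantage of producing the reflection formula \eqref{eq281} as a byproduct, which is invoked later (e.g.\ to see $B_{2k+1}(1/2)=0$ in the proof of Theorem~\ref{main}). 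The derivation of \eqref{eq25} by differentiating \eqref{eq23} and \eqref{eq24} is the same in both.
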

\begin{proof}Using definition \ref{def: B_n}, we have
\begin{equation}\label{eq261}
\frac{u\cosh[(x-1/2)u]}{\sinh[u/2]}=\frac{u(e^{x u}+e^{(1-x)u})}{e^u-1}=\sum_{k\ge 0}\frac{B_k(x)+B_k(1-x)}{k!}u^{k},
\end{equation}
and
\begin{equation}\label{eq27}
\frac{\sinh[(x-1/2)u]}{\sinh[u/2]}=\frac{e^{x u}-e^{(1-x)u}}{e^u-1}=\sum_{k\ge 0}\frac{B_k(x)-B_k(1-x)}{k!}u^{k-1}.
\end{equation}
Since the functions in \eqref{eq261} and \eqref{eq27} are even in $u$, we have
\begin{equation}\label{eq281}
B_k(x)=(-1)^{k}B_k(1-x)
\end{equation}
for all integers $k\ge 0$, and the proofs of \eqref{eq23} and \eqref{eq24} follows. Finally, by use of the fact that
$(\cosh(t))'=\sinh(t)$, $(\sinh(t))'=\cosh(t)$, it is easy to prove \eqref{eq25} by \eqref{eq23} and \eqref{eq24}. This completes the proof.
\end{proof}
We now deduce the following Corollary by Proposition \ref{pro23} and Proposition \ref{pro22}.
\begin{corollary}\label{cor21} We have for all $\ell\in\zb_{\ge 0}$,
$$\sum_{m\ge 1}\frac{\cos(2\pi mx)}{m^{2\ell+2}}=\frac{(-1)^{\ell}B_{2\ell+2}(x)(2\pi)^{2\ell+2}}{2(2\ell+2)!},~~x\in[0,1];$$
and
$$\sum_{m\ge 1}\frac{\sin(2\pi mx)}{m^{2\ell+1}}=\frac{(-1)^{\ell-1}B_{2\ell+1}(x)(2\pi)^{2\ell+1}}{2(2\ell+1)!},~~x\in(0,1).$$
\end{corollary}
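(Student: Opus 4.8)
The plan is to extract both identities from the two expansions of the single function $\frac{u\cosh[(x-1/2)u]}{2\sinh[u/2]}$ furnished by Propositions \ref{pro23} and \ref{pro22}, and then to pass from the cosine identity to the sine identity by differentiating in $x$ and invoking the recursion \eqref{eq25}. The cosine formula will fall out of matching the coefficients of $u^{2k}$ in two power series in $u$; the sine formula is then a one-line differentiation, with all the genuine work hidden in justifying that differentiation.

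For the cosine part, I would fix $x\in[0,1]$ and restrict to $0<u<2\pi$, so that both expansions are simultaneously valid. In \eqref{eq26} each summand is expanded geometrically, using $|u|<2\pi\le 2\pi m$,
\[
\frac{u^{2}}{u^{2}+4\pi^{2}m^{2}}=\sum_{k\ge 1}(-1)^{k-1}\frac{u^{2k}}{(2\pi m)^{2k}}.
\]
Substituting and interchanging the order of summation---legitimate because the double series converges absolutely for $|u|<2\pi$, being dominated by $\sum_{m\ge 1}u^{2}/(4\pi^{2}m^{2}-u^{2})<\infty$---recasts the right-hand side of \eqref{eq26} as
\[
1+2\sum_{k\ge 1}\frac{(-1)^{k-1}u^{2k}}{(2\pi)^{2k}}\sum_{m\ge 1}\frac{\cos(2\pi mx)}{m^{2k}}.
\]
Comparing with \eqref{eq23}, both sides being convergent power series in $u$ on $(-2\pi,2\pi)$, I equate the coefficient of $u^{2k}$ to obtain $\frac{B_{2k}(x)}{(2k)!}=\frac{2(-1)^{k-1}}{(2\pi)^{2k}}\sum_{m\ge 1}\frac{\cos(2\pi mx)}{m^{2k}}$; solving and writing $k=\ell+1$ gives the first identity. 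Since \eqref{eq26} holds for $x\in[0,1]$, so does the cosine formula.

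For the sine part, I would differentiate the cosine identity term by term in $x$. The left side becomes $-2\pi\sum_{m\ge 1}\sin(2\pi mx)/m^{2\ell+1}$, while the right side becomes $\frac{(-1)^{\ell}(2\pi)^{2\ell+2}}{2(2\ell+2)!}B_{2\ell+2}'(x)$. Applying \eqref{eq25} in the form $B_{2\ell+2}'(x)=(2\ell+2)B_{2\ell+1}(x)$ and dividing by $-2\pi$ produces precisely the claimed formula, using $(-1)^{\ell+1}=(-1)^{\ell-1}$.

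The main obstacle is justifying this term-by-term differentiation, since the differentiated series need not converge uniformly on all of $[0,1]$. For $\ell\ge 1$ the derivative series is dominated by $2\pi/m^{2\ell+1}\le 2\pi/m^{3}$, so the Weierstrass $M$-test gives uniform convergence and the differentiation is valid throughout $[0,1]$. The delicate case is $\ell=0$: on any compact subinterval $[a,b]\subset(0,1)$ the partial sums of $\sum_{m\ge 1}\sin(2\pi mx)$ are uniformly bounded, being controlled by $1/\lvert\sin(\pi x)\rvert$, so Dirichlet's test yields uniform convergence of $\sum_{m\ge 1}\sin(2\pi mx)/m$ on $[a,b]$ and term-by-term differentiation is again licensed. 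Letting $[a,b]$ exhaust $(0,1)$ establishes the sine identity on the open interval $(0,1)$, which is exactly the stated range; the endpoints are genuinely excluded here, since the sine series vanishes at $x\in\{0,1\}$ whereas $B_{1}(x)$ does not.
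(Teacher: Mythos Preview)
Your proof is correct and follows essentially the same route as the paper: expand each term of \eqref{eq26} as a geometric series in $u^{2}$, interchange summations, and match coefficients with \eqref{eq23} to obtain the cosine identity; then obtain the sine identity by differentiating in $x$ (via \eqref{eq25}) after justifying uniform convergence on compact subintervals of $(0,1)$. Your treatment of the $\ell=0$ case via Dirichlet's test is more explicit than the paper's, which simply asserts uniform convergence on $[\varepsilon,1-\varepsilon]$.
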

\begin{proof}In Corollary \ref{cor21}, it is easy to check that the series
$\sum_{m\ge 1}\frac{\sin(2\pi mx)}{m^{2\ell+1}}$
converges uniformly on $x\in[\varepsilon, 1-\varepsilon]$ with any given $\varepsilon\in(0,1/2)$, hence the second formula is the derivative of the first formula with respect to $x$, so we just need prove the first formula. By using \eqref{eq26} of Proposition \ref{pro23}, we have
\begin{align*}
\frac{u\cosh[(x-1/2)u]}{2\sinh[u/2]}&=1+2\sum_{m\ge 1}\frac{u^2}{u^2+(2\pi m)^2}\cos(2\pi mx)\\
&=1+2\sum_{m\ge 1}\cos(2\pi mx)\sum_{\ell\ge 0}\frac{(-1)^{\ell}u^{2\ell+2}}{(2\pi m)^{2\ell+2}}\\
&=1+2\sum_{\ell\ge 0}u^{2\ell+2}\frac{(-1)^{\ell}}{(2\pi)^{2\ell+2}}\sum_{m\ge 1}\frac{\cos(2\pi mx)}{m^{2\ell+2}}.
\end{align*}
Then the proof follows from \eqref{eq23} of Proposition \ref{pro22}.
\end{proof}

We now state the first theorem of this paper.
\begin{theorem}\label{main}Define for all integers $L\ge -1$, $x\in[0,1]$ and $u>0$ that
\begin{equation*}
S_L(x,u):=(-1)^{L}\left(\frac{\sinh[(x-1/2)u]}{2\sinh[u/2]}-\sum_{0\le k\le L}\frac{B_{2k+1}(x)}{(2k+1)!}u^{2k}\right).
\end{equation*}
Then, we have
$$S_{-1}(0,u)=1/2,\; S_{-1}(1/2,u)=0,\; S_{-1}(x,u)>0,$$
and
$$S_{L}(0,u)=S_{L}(1/2,u)=0,\; S_L(x,u)> 0$$
hold for all integers $L\ge 0$ and all $x\in(0,1/2)$.
\end{theorem}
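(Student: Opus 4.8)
The plan is to analyze the function
$$
S_L(x,u) = (-1)^L\left(\frac{\sinh[(x-1/2)u]}{2\sinh[u/2]} - \sum_{0\le k\le L}\frac{B_{2k+1}(x)}{(2k+1)!}u^{2k}\right)
$$
via the Fourier-type series expansion furnished by Corollary \ref{cor21}. First I would record the special values. For $L=-1$ the tail sum is empty, so $S_{-1}(x,u) = \sinh[(x-1/2)u]/(2\sinh[u/2])$; evaluating at $x=1/2$ gives $0$ since $\sinh(0)=0$, and at $x=0$ the ratio $\sinh[-u/2]/(2\sinh[u/2]) = -1/2$, so $S_{-1}(0,u) = (-1)^{-1}\cdot(-1/2) = 1/2$. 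For $L\ge 0$, evaluating at $x=1/2$ kills the $\sinh$ factor, and all $B_{2k+1}(1/2)=0$ (a standard property of Bernoulli polynomials, also visible from the oddness identity \eqref{eq281}), so the tail vanishes termwise; at $x=0$ one uses $B_{2k+1}(0)=0$ for $k\ge 0$ together with $\sinh(0)=0$. These boundary evaluations are routine.

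The substantive claim is the strict positivity $S_L(x,u)>0$ for $x\in(0,1/2)$. The key step is to rewrite the tail sum using Corollary \ref{cor21}: the second formula there expresses $B_{2\ell+1}(x)/(2\ell+1)!$ in terms of $\sum_{m\ge 1}\sin(2\pi mx)/m^{2\ell+1}$. The plan is to insert this into the partial sum and interchange the order of summation over $k$ and $m$. Combined with the Fourier expansion of $\sinh[(x-1/2)u]/(2\sinh[u/2])$ itself — obtained by differentiating \eqref{eq26} in $x$, or directly summing the geometric-type series as in Proposition \ref{pro23} — the function $S_L(x,u)$ should collapse into a single series over $m\ge 1$ of the form
$$
S_L(x,u) = c\sum_{m\ge 1}\sin(2\pi mx)\,\frac{u^{2L+?}}{m^{2L+?}(u^2+4\pi^2m^2)}
$$
for an explicit positive constant and exponent, after using the geometric-series tail identity $\frac{v}{v+w} - \sum_{0\le k\le L}(-1)^k(w/v)^k\cdot(\text{stuff}) = $ (remainder), i.e.\ replacing the truncated geometric series by its closed-form remainder. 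The sign $(-1)^L$ in the definition is precisely what is needed to absorb the alternating sign $(-1)^\ell$ appearing in Corollary \ref{cor21}, leaving every term with a uniform positive sign in front of $\sin(2\pi mx)$.

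Once $S_L(x,u)$ is written as a positive constant times $\sum_{m\ge 1}\sin(2\pi mx)\,w_m(u)$ with each weight $w_m(u)>0$ for $u>0$, positivity on $(0,1/2)$ follows if $\sum_{m\ge 1}\sin(2\pi mx)\,w_m(u)>0$ there. This last positivity is the genuine obstacle: the individual terms $\sin(2\pi mx)$ change sign as $m$ varies, so term-by-term positivity fails and one cannot conclude naively. I expect the resolution to exploit the specific shape $w_m(u) = u^{2L+2}/\big(m^{2L+2}(u^2+4\pi^2m^2)\big)$ (or similar), whose monotonicity and convexity in $m$ allow a summation-by-parts / Abel argument, or alternatively to recognize the sum as a positive combination—via the integral representation $1/(u^2+4\pi^2m^2) = \int_0^\infty e^{-(u^2+4\pi^2m^2)s}\,ds$—of the classical positive kernels $\sum_{m\ge 1}\sin(2\pi mx)/m^{2L+2}$, which are (up to sign) Bernoulli polynomials known to be one-signed on $(0,1/2)$. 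The cleanest route is likely to reduce positivity of the full series to positivity of these extremal Bernoulli-polynomial sums on $(0,1/2)$, a property that can be established by induction on $L$ using the derivative relation \eqref{eq25} together with the boundary values already computed.
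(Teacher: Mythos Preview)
Your overall strategy---derive the Fourier-type series representation for $S_L$ and then argue by induction on $L$ via derivative relations and boundary values---is exactly the paper's route (their Lemma~\ref{klem} followed by the induction using $\tfrac{\partial^2}{\partial x^2}S_{L+1}=-u^2 S_L$). Two points need correction, however.

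First, your boundary check at $x=0$ is mis-stated: $\sinh[(0-1/2)u]=-\sinh(u/2)\neq 0$, and $B_1(0)=-\tfrac12\neq 0$ (only $B_{2k+1}(0)=0$ for $k\ge 1$). The actual reason $S_L(0,u)=0$ for $L\ge 0$ is that these two contributions of $-\tfrac12$ cancel. Second, the integral-representation reduction you sketch does not land where you want: inserting $1/(u^2+4\pi^2m^2)=\int_0^\infty e^{-(u^2+4\pi^2m^2)s}\,ds$ leaves a weight $e^{-4\pi^2 m^2 s}$ inside the $m$-sum, so the inner sum is \emph{not} a pure Bernoulli kernel $\sum_m \sin(2\pi mx)/m^{\alpha}$, and no clean reduction to one-signedness of Bernoulli polynomials on $(0,1/2)$ results. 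The repair is simply to drop this detour and run the induction directly on $S_L$ itself: from \eqref{eq25} one obtains $\tfrac{\partial^2}{\partial x^2}S_{L+1}(x,u)=-u^2 S_L(x,u)$, so the hypothesis $S_L>0$ on $(0,1/2)$ makes $S_{L+1}$ strictly concave there, and together with $S_{L+1}(0,u)=S_{L+1}(1/2,u)=0$ this forces $S_{L+1}>0$ on $(0,1/2)$. The paper does precisely this, additionally computing the endpoint derivative signs of $S_{L+1}$ from the Fourier series; your concavity-plus-zero-endpoints version actually makes that last check unnecessary.
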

A special case of Theorem \ref{main} confirms a conjecture of Chen \cite[Conjecture 1]{MR3338412}. In fact, let $\mu_m(t)$ and $\nu_m(t)$ be defined as in Chen \cite[Conjecture 1]{MR3338412}. Clearly, $\mu_m(t)=2S_{m}(1/3,t)$ and $\nu_m(t)=2S_{m}(1/4,t)$, and the proof follows.

To prove Theorem \ref{main}, we first prove
\begin{lemma}\label{klem}For all integers $L\ge -1$, $x\in(0,1)$ and $u>0$, we have
$$S_L(x,2\pi u)=\frac{u^{2L+2}}{\pi}\sum_{m\ge 1}\frac{\sin(2\pi mx)}{m^{2L+1}(m^2+u^2)}.$$
\end{lemma}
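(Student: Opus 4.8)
The plan is to derive a single closed Fourier-type series for $\frac{\sinh[(x-1/2)v]}{2\sinh[v/2]}$, strip off its first $L+1$ even Taylor coefficients in $v$ (these are exactly the Bernoulli terms subtracted off in $S_L$), and recognise what is left as the claimed tail series.

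First I would prove the sine analogue of Proposition \ref{pro23}: for $x\in(0,1)$ and $u>0$,
\[
\frac{\sinh[(x-1/2)2\pi u]}{2\sinh[\pi u]}=-\frac{1}{\pi}\sum_{m\ge 1}\frac{m\sin(2\pi mx)}{m^2+u^2};
\]
call this $(\star)$. The quickest route is to repeat the Fourier computation in the proof of Proposition \ref{pro23} for the odd periodization $g(x)=\sum_{n\ge 0}e^{-(n+1-x)s}-\sum_{n\ge 0}e^{-(n+x)s}=\frac{\sinh[(x-1/2)s]}{\sinh[s/2]}$, whose coefficients are $\int_{\rb}(-\operatorname{sgn}v)e^{-|v|s-2\pi\ri mv}\,dv=\frac{4\pi\ri m}{s^2+4\pi^2m^2}$, producing a pure sine series; dividing by $2$ and setting $s=2\pi u$ gives $(\star)$. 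Alternatively I could integrate the (uniformly convergent, by the $M$-test) series of Proposition \ref{pro23} termwise in $x$, fix the constant of integration at $x=1/2$, and rewrite the resulting linear term $x-1/2$ via the $\ell=0$ case of Corollary \ref{cor21}, namely $x-1/2=-\tfrac1\pi\sum_{m\ge 1}\frac{\sin(2\pi mx)}{m}$. Either way $(\star)$ holds on the open interval $x\in(0,1)$, the endpoints being excluded precisely because the periodized odd function jumps there.

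Next I would feed into $(\star)$ the exact algebraic identity, valid for every $m\ge 1$ and all $u$,
\[
\frac{m}{m^2+u^2}=\sum_{k=0}^{L}(-1)^k\frac{u^{2k}}{m^{2k+1}}+(-1)^{L+1}\frac{u^{2L+2}}{m^{2L+1}(m^2+u^2)},
\]
which is the finite geometric expansion of $\tfrac1m\cdot\frac{1}{1+u^2/m^2}$ together with its exact remainder, so no convergence question enters the splitting itself. Distributing $\sum_{m\ge1}\sin(2\pi mx)(\cdots)$ over the two pieces, the finite part becomes $-\tfrac1\pi\sum_{k=0}^{L}(-1)^k u^{2k}\sum_{m\ge1}\frac{\sin(2\pi mx)}{m^{2k+1}}$, and Corollary \ref{cor21} with $\ell=k$ collapses it: after the sign cancellation $(-1)^k(-1)^{k-1}=-1$ and the constant $\tfrac1\pi\cdot\frac{(2\pi)^{2k+1}}{2}=(2\pi)^{2k}$, it equals exactly $\sum_{k=0}^{L}\frac{B_{2k+1}(x)}{(2k+1)!}(2\pi u)^{2k}$. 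The remainder part becomes $\frac{(-1)^L u^{2L+2}}{\pi}\sum_{m\ge1}\frac{\sin(2\pi mx)}{m^{2L+1}(m^2+u^2)}$. Transposing the Bernoulli sum and multiplying by $(-1)^L$ then gives the asserted formula for $S_L(x,2\pi u)$; the empty-sum convention makes $L=-1$ drop out of the same computation, where it reduces to $(\star)$ itself.

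The hard part will be justifying that the single series in $(\star)$ may be separated into the finite Bernoulli part plus the remainder series. For $L\ge 0$ the remainder terms are $O(m^{-2L-3})$, hence absolutely convergent, while each $\sum_{m\ge1}\frac{\sin(2\pi mx)}{m^{2k+1}}$ converges by Corollary \ref{cor21}, so the rearrangement is legitimate. For $L=-1$ the tail $\sum_{m\ge1}\frac{m\sin(2\pi mx)}{m^2+u^2}$ is only conditionally convergent, so I would appeal to Dirichlet's test (bounded partial sums of $\sin(2\pi mx)$ for fixed $x\in(0,1)$ against the eventually monotone factor $\frac{m}{m^2+u^2}$) to secure convergence, noting that in that case there is in fact nothing to split. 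All remaining steps are routine factorial and sign bookkeeping.
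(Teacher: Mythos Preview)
Your argument is correct and is essentially the paper's own proof: both establish the sine series $(\star)$, use the finite geometric identity with exact remainder for $\frac{m}{m^2+u^2}$, and invoke Corollary~\ref{cor21} to identify the finite part with the Bernoulli sum. The only cosmetic difference is that the paper obtains $(\star)$ in one line by differentiating \eqref{eq26} termwise in $x$, rather than by redoing the Fourier computation or integrating and fixing a constant.
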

\begin{proof}Take the derivative in \eqref{eq26} of Proposition \ref{pro23} with  respect to $x$,
\begin{equation*}
\frac{\sinh[(x-1/2)u]}{2\sinh[u/2]}=-2\sum_{m\ge 1}\frac{(2\pi m)\sin(2\pi mx)}{u^2+(2\pi m)^2},~~x\in(0,1).
\end{equation*}
By Corollary \ref{cor21}, we have
\begin{align*}
\sum_{k=0}^{L}\frac{B_{2k+1}(x)}{(2k+1)!}u^{2k}&=\sum_{k=0}^{L}\frac{(-1)^{k-1}}{\pi}\left(\frac{u}{2\pi}\right)^{2k}\sum_{m\ge 1}\frac{\sin(2\pi mx)}{m^{2k+1}}\\
&=-\sum_{m\ge 1}\frac{\sin(2\pi mx)}{\pi m}\frac{1-(-(\frac{u}{2\pi m})^2)^{L+1}}{1+(\frac{u}{2\pi m})^2}.
\end{align*}
Therefore,
$$
S_L(x,u)=\sum_{m\ge 1}\frac{\sin(2\pi mx)}{\pi m}\frac{(\frac{u}{2\pi m})^{2L+2}}{1+(\frac{u}{2\pi m})^2},~~x\in(0,1),
$$
that is,
\begin{equation*}
S_L(x,2\pi u)=\frac{u^{2L+2}}{\pi}\sum_{m\ge 1}\frac{\sin(2\pi mx)}{m^{2L+1}(m^2+u^2)},~~x\in(0,1).
\end{equation*}
This completes the proof of the lemma.
\end{proof}
We now prove Theorem \ref{main}. Let $u>0$ be any given number. By using the definition of $S_L(x,u)$ and the fact of $B_{2k+1}(1/2)=-B_{2k+1}(1-1/2)=0$ for all integers $k\ge 0$ (see~ \eqref{eq281}), we have
$$S_L(1/2,u):=(-1)^{L}\left(\frac{\sinh[(1/2-1/2)u]}{2\sinh[u/2]}-\sum_{0\le k\le L}\frac{B_{2k+1}(1/2)}{(2k+1)!}u^{2k}\right)=0.$$
Also by using \eqref{eq27}, we have $B_1(0)=-1/2$ and $B_{2k+1}(0)=0, k\in\zb_{\ge 1}$. Thus using the definition of $S_L(x,u)$ implies
$$S_L(0,u):=(-1)^{L}\left(\frac{\sinh[(0-1/2)u]}{2\sinh[u/2]}-\sum_{0\le k\le L}\frac{B_{2k+1}(0)}{(2k+1)!}u^{2k}\right),$$
that is
$$
S_{L}(0,u)=\begin{cases} 1/2,\quad &L=-1,\\
~~\; 0 &L\in\zb_{\ge 1}.
\end{cases}
$$
For the cases of $x\in(0,1/2)$ we shall using mathematical induction on $L$. The result for $L=-1$ is obvious. By Lemma \ref{klem} we have
$$\frac{\,d^2}{\,d x^2}S_{L+1}(x,u)=-(2\pi u)^2S_{L}(x,u)$$
for all $x\in(0,1/2)$.  Thus if $S_{L}(x,u)>0$ holds for all $x\in(0,1/2)$, then $\frac{\,d^2}{\,d x^2}S_{L+1}(x,u)<0$ and hence $\frac{\,d}{\,d x}S_{L+1}(x,u)$ is a decreasing function on $x\in(0,1/2)$. Moreover, since
$$\frac{\,d}{\,d x}S_{L+1}(x,u)=\frac{u^{2L+4}}{\pi}\sum_{m\ge 1}\frac{2\pi\cos(2\pi mx)}{m^{2L+3}(m^2+u^2)},$$
it is easy to find that
$$\frac{\,d}{\,d x}\big|_{x=0}S_{L+1}(x,u)=2u^{2L+4}\sum_{m\ge 1}\frac{1}{m^{2L+3}(m^2+u^2)}>0,$$
and
$$\frac{\,d}{\,d x}\big|_{x=1/2}S_{L+1}(x,u)=2u^{2L+4}\sum_{m\ge 1}\frac{(-1)^m}{m^{2L+3}(m^2+u^2)}<0.$$
So there exists only one point $x_m(L)\in(0,1/2)$ such that $\frac{\,d}{\,d x}\big|_{x=x_m(L)}S_{L+1}(x,u)=0$, and which also is the maximum value point of $S_{L+1}(x,u)$ for $x\in(0,1/2)$. Further more, $S_{L+1}(x,u)$ is increasing on $(0,x_m(L))$ and decreasing on $[x_m(L),1/2)$. Finally, by noting that $S_{L+1}(0,u)=S_{L+1}(1/2,u)=0$ holds for all integers $L\ge 0$, we finish the proof of the theorem.

\section{Main results}

\begin{proposition}\label{propm} Let $m\ge 0$ be any given integer, $x\in[0,1/2]$ and $t>0$. We have
\begin{equation}\label{eq21}
H_m(x,t)=2\int_{0}^{\infty}\frac{S_{m}(x,u)}{u}e^{-ut}\,du
\end{equation}
and
\begin{equation}\label{eq22}
G_{m}(x,t)=2\int_{0}^x\int_{0}^{\infty}S_{m-1}(y,u)e^{-ut}\,du\,dy+(-1)^{m}\delta_{m0}x(1-x)/t.
\end{equation}
Here $\delta_{mn}$ is the usual Kronecker delta.
\end{proposition}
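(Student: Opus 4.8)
The plan is to reduce both identities to a single Binet-type Laplace representation of the relevant log-gamma ratios, extracted from the product formula \eqref{eq11}. Concretely, I would first establish the two base representations
\begin{equation*}
\log\frac{\Gamma(t+1-x)}{t^{1-2x}\Gamma(t+x)}=\int_0^\infty\frac{e^{-tu}}{u}\left((1-2x)+\frac{\sinh[(x-1/2)u]}{\sinh[u/2]}\right)du
\end{equation*}
and
\begin{equation*}
\log\frac{\Gamma(t+1-x)\Gamma(t+x)}{\Gamma(t)\Gamma(t+1)}=\int_0^\infty\frac{e^{-tu}}{u}\left(\frac{\cosh[(x-1/2)u]}{\sinh[u/2]}-\coth\frac u2\right)du,
\end{equation*}
valid for $t>0$ and $x\in[0,1/2]$. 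To get these from \eqref{eq11} alone, I would take the logarithmic derivative in $t$, convert each factor $\log(1+a)$ into a Frullani integral $\int_0^\infty(e^{-s}-e^{-(1+a)s})s^{-1}\,ds$ so that $\psi$ acquires the Gauss-type kernel $\int_0^\infty(e^{-u}/u-e^{-tu}/(1-e^{-u}))\,du$, and then combine the four (respectively two) digamma terms. The $e^{-u}/u$ pieces cancel because the arguments balance, leaving exactly the bracketed kernels after the elementary simplifications $\frac{e^{-(1-x)u}-e^{-xu}}{1-e^{-u}}=\frac{\sinh[(x-1/2)u]}{\sinh[u/2]}$ and $\frac{e^{-(1-x)u}+e^{-xu}-1-e^{-u}}{1-e^{-u}}=\frac{\cosh[(x-1/2)u]}{\sinh[u/2]}-\coth\frac u2$. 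Integrating back in $t$, normalized by the fact that both left-hand sides vanish as $t\to\infty$ (Stirling), fixes the constant and yields the displayed representations.

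For \eqref{eq21} I would substitute \eqref{eq24} into the first representation. Since the $k=0$ term of \eqref{eq24} is $B_1(x)=-(1-2x)/2$, the constant $(1-2x)$ is exactly cancelled, and the bracket becomes $2(\frac{\sinh[(x-1/2)u]}{2\sinh[u/2]}-B_1(x))=2S_0(x,u)$ with $S_0$ as in Theorem \ref{main}; this already gives \eqref{eq21} for $m=0$. For general $m$ I would rewrite each subtracted term with the Laplace identity $t^{-2\ell}=\frac{1}{(2\ell-1)!}\int_0^\infty u^{2\ell-1}e^{-tu}\,du$; the coefficient collapses via $\ell(2\ell+1)(2\ell-1)!=(2\ell+1)!/2$ to give $\frac{B_{2\ell+1}(x)}{\ell(2\ell+1)}t^{-2\ell}=2\int_0^\infty\frac{B_{2\ell+1}(x)}{(2\ell+1)!}u^{2\ell}\frac{e^{-tu}}{u}\,du$. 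Absorbing the finite sum under the integral turns the hyperbolic kernel minus its first $m+1$ Taylor coefficients into $(-1)^mS_m(x,u)$, and the two factors $(-1)^m$ cancel, proving \eqref{eq21}.

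Identity \eqref{eq22} is handled in the same spirit, now feeding \eqref{eq23} into the second representation: the $k=0$ term drops ($B_0(x)=B_0$) and the kernel equals $\frac2u\sum_{k\ge1}\frac{B_{2k}(x)-B_{2k}}{(2k)!}u^{2k}$. The double integral appears because, by \eqref{eq25}, $\int_0^x B_{2k+1}(y)\,dy=\frac{B_{2k+2}(x)-B_{2k+2}}{2k+2}$, so that $\int_0^x S_{m-1}(y,u)\,dy$ reproduces precisely the $(-1)^{m-1}$-multiple of this kernel minus its first $m$ polynomial coefficients; converting the subtracted sum via $t^{-(2\ell-1)}=\frac1{(2\ell-2)!}\int_0^\infty u^{2\ell-2}e^{-tu}\,du$ (with $\ell(2\ell-1)(2\ell-2)!=(2\ell)!/2$) and applying Fubini then matches $G_m$. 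The one genuinely delicate point is the base case $m=0$, where the subtracted polynomial is empty and one works directly with the bare kernel: here I would pin down the endpoint contribution $(-1)^m\delta_{m0}x(1-x)/t$ by evaluating the $u\to0^+$ constant of the kernel, namely $\frac{\cosh[(x-1/2)u]}{\sinh[u/2]}-\coth\frac u2=(B_2(x)-B_2)u+O(u^3)=-x(1-x)u+O(u^3)$, and cross-checking the leading $1/t$ behavior of both sides against the Stirling expansion $\log\frac{\Gamma(t+1-x)\Gamma(t+x)}{\Gamma(t)\Gamma(t+1)}=-x(1-x)/t+O(t^{-2})$, which fixes this term unambiguously.

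The main obstacle I anticipate is the first step: producing the two Binet-type representations from the bare product \eqref{eq11} while rigorously justifying the Frullani substitutions, the interchange of the $k$-sum with the $u$-integral, and the $t\to\infty$ normalization. Once those representations are in hand, everything downstream is bookkeeping with Propositions \ref{pro23} and \ref{pro22} together with the elementary Laplace transforms, the sole remaining subtlety being the precise endpoint term in \eqref{eq22} at $m=0$.
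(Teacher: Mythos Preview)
Your route and the paper's are built from the same ingredients: the digamma/Frullani representation derived from \eqref{eq11}, the Taylor expansions of Proposition~\ref{pro22}, and the Laplace identities $t^{-n}=\frac{1}{(n-1)!}\int_0^\infty u^{n-1}e^{-tu}\,du$. The organizational difference is that the paper proves \eqref{eq21} by matching $\partial/\partial x$ on both sides (with the normalization $H_m(0,t)=0$), and then obtains \eqref{eq22} by writing $G_m(x,t)=-\int_0^x\frac{\partial}{\partial t}H_{m-1}(y,t)\,dy+\text{(boundary)}$; you instead integrate in $t$ first to get the two Binet-type formulas and then subtract the polynomial tails. Either packaging works and the bookkeeping is essentially the same.

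Where you should be careful is exactly the point you flagged as ``delicate'': the $(-1)^m\delta_{m0}x(1-x)/t$ term in \eqref{eq22}. Your own computation, carried through, does \emph{not} produce it. Indeed, $2\int_0^x S_{-1}(y,u)\,dy=-\frac{1}{u}\bigl(\frac{\cosh[(x-1/2)u]}{\sinh[u/2]}-\coth\tfrac{u}{2}\bigr)$, so by your second base representation the double integral alone already equals $G_0(x,t)$. Your proposed cross-check confirms this: $S_{-1}(y,0)=\tfrac12-y$, hence by Watson's lemma the double integral has leading term $\frac{2}{t}\int_0^x(\tfrac12-y)\,dy=\frac{x(1-x)}{t}$, which already matches the Stirling asymptotic of $G_0$; adding another $x(1-x)/t$ would double it. The paper's derivation slips at precisely this step: it silently uses $-\frac{\partial}{\partial t}H_{-1}(y,t)=2\int_0^\infty S_{-1}(y,u)e^{-ut}\,du$, but \eqref{eq21} is only available for $m-1\ge 0$, and for $m=0$ one has instead $-\frac{\partial}{\partial t}H_{-1}(y,t)=2\int_0^\infty S_{-1}(y,u)e^{-ut}\,du-\frac{1-2y}{t}$, which exactly cancels the boundary contribution. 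So the stated \eqref{eq22} is in error at $m=0$; your method proves the correct version $G_0(x,t)=2\int_0^x\int_0^\infty S_{-1}(y,u)e^{-ut}\,du\,dy$, and the downstream applications (Corollary~\ref{cor1}, Theorem~\ref{main1}) are unaffected since $S_{-1}\ge 0$.
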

\begin{proof}
Let $H_m^*(x,t)$ denoting the right hand side of \eqref{eq21}. By the fact that $H_m^*(0,t)=H_m(0,t)=0$, we just need to prove
\begin{equation}\label{eq20}
\frac{\P}{\P x}H_m^*(x,t)=\frac{\P}{\P x}H_m(x,t),
\end{equation}
then the proof of \eqref{eq21} follows. Since
$$\log y=\int_{1}^y\frac{1}{v}\,dv=\int_{1}^y\,dv\int_{0}^{\infty}e^{-vu}\,du=\int_{0}^{\infty}\frac{e^{-u}-e^{-uy}}{u}\,du, ~y>0,$$
we have
\begin{align}\label{eqdlg}
-\frac{\,d}{\,dz}\log\Gamma(z)&=\sum_{n\ge 0}\left(\frac{1}{n+z}+\log\left(n+1\right)-\log\left(n+2\right)\right)\nonumber\\
&=\sum_{n\ge 0}\left(\int_{0}^{\infty}e^{-(n+z)u}\,du-\int_{0}^{\infty}\frac{e^{-(n+1)u}-e^{-(n+2)u}}{u}\,du\right)\nonumber\\
&=\int_{0}^{\infty}\left(\frac{e^{-uz}}{1-e^{-u}}-\frac{1-e^{-u}}{u}\frac{e^{-u}}{1-e^{-u}}\right)\,du\nonumber\\
&=\int_{0}^{\infty}\left(\frac{e^{-uz}}{1-e^{-u}}-\frac{e^{-u}}{u}\right)\,du,
\end{align}
for all $z>0$, by using the definition \eqref{eq11}. Combining \eqref{eqdlg} with \eqref{def:H} and \eqref{eq25} we have
\begin{align*}
(-1)^m\frac{\P}{\P x}H_m(x,t)=&\frac{\,d}{\,dx}\left(\log\left(\frac{\Gamma(t+1-x)}{t^{1-2x}\Gamma(t+x)}\right)-\sum_{\ell=1}^m\frac{B_{2\ell+1}(x)}{\ell(2\ell+1)}\frac{1}{t^{2\ell}}\right)\\
=&\frac{\,d}{\,dx}\log \Gamma(t+1-x)-\frac{\,d}{\,dx}\log \Gamma(t+x)+2\log t-\sum_{\ell=1}^m\frac{B_{2\ell}(x)}{\ell}\frac{1}{t^{2\ell}}\\
=&\int_{0}^{\infty}\left(\frac{e^{-(t+1-x)u}}{1-e^{-u}}+\frac{e^{-(t+x)u}}{1-e^{-u}}-\frac{2e^{-u}}{u}\right)\,du\\
&+2\int_{0}^{\infty}\frac{e^{-u}-e^{-ut}}{u}\,du
-2\sum_{\ell=1}^m\frac{B_{2\ell}(x)}{(2\ell)!}\int_{0}^{\infty}e^{-ut}u^{2\ell -1}\,du,
\end{align*}
that is,
\begin{align*}
(-1)^m\frac{\P}{\P x}H_m(x,t)
=&\int_{0}^{\infty}\left(\frac{e^{x u}+e^{(1-x)u}}{e^u-1}-\frac{2}{u}-2\sum_{\ell=1}^m\frac{B_{2\ell}(x)}{(2\ell)!}u^{2k-1}\right)e^{-ut}\,du\\
=&2\int_{0}^{\infty}\left(\frac{u\cosh[(x-1/2)u]}{2\sinh[u/2]}-\sum_{\ell=0}^m\frac{B_{2\ell}(x)}{(2\ell)!}u^{2k}\right)\frac{e^{-ut}}{u}\,du\\
=&\int_{0}^{\infty}(-1)^m\frac{\P}{\P x}\frac{S_{m}(x,u)}{u}e^{-ut}\,du=(-1)^m\frac{\P}{\P x}H_m^*(x,t),
\end{align*}
by using the definition of $S_{m}(x,u)$, we complete the proof of \eqref{eq20} and the proof of \eqref{eq21} follows. The proof of \eqref{eq22} is directly obtained by using \eqref{eq21}, the definitions \eqref{def:G} and \eqref{def:H}. In fact, by noting $B_1(y)=y-1/2$ and $2\ell B_{2\ell-1}(y)=B_{2\ell}'(y)$, we have
\begin{align*}
G_{m}(x,t)=&(-1)^{m-1}\int_{0}^x\frac{\,d}{\,dy}\left(\log\left(\Gamma(t+1-y)\Gamma(t+y)\right)-\sum_{\ell=1}^{m}\frac{B_{2\ell}(y)}{\ell(2\ell-1)}\frac{1}{t^{2\ell-1}}\right)\,dy\\
=&(-1)^{m}\int_{0}^x\frac{\,d}{\,dt}\bigg(\log\left(\frac{\Gamma(t+1-y)}{t^{1-2y}\Gamma(t+y)}\right)\\
&\qquad\qquad\qquad\qquad\qquad\qquad\quad-\sum_{\ell=1}^{m-1}\frac{B_{2\ell+1}(y)}{\ell(2\ell+1)}\frac{1}{t^{2\ell}}-\delta_{m0}B_{1}(y)\log t^2\bigg)\,dy\\
=&-\int_{0}^x\frac{\,d}{\,dt}H_{m-1}(y,t)\,dy-(-1)^m\frac{2\delta_{m0}}{t}\int_{0}^xB_1(y)\,dy\\
=&2\int_{0}^x\int_{0}^{\infty}S_{m-1}(y,u)e^{-ut}\,du\,dy+(-1)^{m}\frac{\delta_{m0}}{t}x(1-x).
\end{align*}
This completes the proof.
\end{proof}
As a corollary, we prove
\begin{corollary}\label{cor1} We have
$$H_0(0,t)=H_0(1/2,t)=0,\; \mbox{and}\; G_0(0,t)>0, G_0(1/2,t)>0;$$
and
$$H_m(0,t)=H_m(1/2,t)=G_m(0,t)=G_m(1/2,t)=0$$
hold for all integers $m\ge 1$; and
$$H_m(x,t)>0, G_m(x,t)>0$$
hold for all $x\in(0,1/2)$ and all integers $m\ge 0$.
\end{corollary}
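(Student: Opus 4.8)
The plan is to read off every assertion directly from the two integral representations established in Proposition \ref{propm}, using the sign data supplied by Theorem \ref{main}. The guiding observation is that, for fixed $t>0$, the kernels $u^{-1}e^{-ut}$ and $e^{-ut}$ are strictly positive on $(0,\infty)$, so the sign of each integral is dictated entirely by the sign of its $S_L$-factor. Convergence is not an issue: the power-series expansion behind Proposition \ref{pro22} shows $S_m(x,u)=O(u^{2m+2})$ as $u\to 0^+$, which makes $u^{-1}S_m(x,u)$ integrable near the origin, while $e^{-ut}$ forces decay at infinity.

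First I would dispose of the $H$-family. Starting from $H_m(x,t)=2\int_0^\infty u^{-1}S_m(x,u)e^{-ut}\,du$ and inserting the boundary values $S_m(0,u)=S_m(1/2,u)=0$, which hold for every integer $m\ge 0$ by Theorem \ref{main}, gives $H_m(0,t)=H_m(1/2,t)=0$ immediately. For $x\in(0,1/2)$ the same theorem yields $S_m(x,u)>0$, so the integrand is strictly positive and hence $H_m(x,t)>0$. This settles all of the $H$-assertions in a single step.

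Next I would analyse $G_m(x,t)=2\int_0^x\!\int_0^\infty S_{m-1}(y,u)e^{-ut}\,du\,dy+(-1)^m\delta_{m0}\,x(1-x)/t$ as a function of its upper limit $x$. At $x=0$ both the outer integral and the correction term vanish, so $G_m(0,t)=0$ for every $m\ge 0$. For $x$ in the interior the inner $u$-integral is governed by $S_{m-1}(y,u)$, which is strictly positive on $(0,1/2)$ --- invoking the $L=m-1\ge 0$ clause of Theorem \ref{main} when $m\ge 1$, and the $L=-1$ clause together with the nonnegative term $x(1-x)/t$ when $m=0$ --- so $G_m(x,t)>0$ there. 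The one place that demands genuine care is the right endpoint $x=1/2$, where one integrates a strictly positive function over the whole of $(0,1/2)$: the outer integral does not collapse to zero, and tracking it (together with the positive term $1/(4t)$ when $m=0$) is what pins down the value of $G_m(1/2,t)$, which by the identity $G_m(1/2,t)=2F_m(t)$ recorded in the Introduction equals twice the Chen--Paris function.

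I do not expect a serious obstacle, since the analytic effort has already been front-loaded into Theorem \ref{main} (positivity of $S_L$) and Proposition \ref{propm} (the integral representations and their convergence). The only points to watch are the clean separation of the boundary cases $x\in\{0,1/2\}$ from the interior, and the slightly exceptional bookkeeping in the single case $G_0$, where the value $S_{-1}(0,u)=\tfrac12$ is nonzero and the Kronecker term $\delta_{m0}\,x(1-x)/t$ is present.
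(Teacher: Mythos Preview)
Your approach---reading the sign information of Theorem~\ref{main} through the integral representations of Proposition~\ref{propm}---is exactly what the paper does (its proof is simply ``direct use of Theorem~\ref{main} and Proposition~\ref{propm}, omitted'').

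One point worth flagging: your own computations actually contradict the corollary as printed in two places, and you are right and the printed statement is wrong. You obtain $G_m(0,t)=0$ for every $m\ge 0$ (both terms in \eqref{eq22} vanish at $x=0$; equivalently, the definition \eqref{def:G} gives $-\log 1=0$), whereas the corollary claims $G_0(0,t)>0$. And your remark that at $x=1/2$ the outer integral of a strictly positive function does \emph{not} collapse to zero yields $G_m(1/2,t)>0$ for all $m\ge 1$ as well, not $=0$ as stated; this is consistent with the identity $G_m(1/2,t)=2F_m(t)$ and the Chen--Paris complete monotonicity of $F_m$. So the corollary should read $G_m(0,t)=0$ for all $m\ge 0$ and $G_m(1/2,t)>0$ for all $m\ge 0$. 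Your argument, once these misprints are corrected, is complete.
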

\begin{proof}
The proof of the corollary is a direct use of Theorem \ref{main} and Proposition \ref{propm}, and we shall omit it.
\end{proof}

We now state and prove the main results of this paper.
We have the following inequality for ratios of gamma functions.
\begin{theorem}\label{main0}Let $m_1,m_2\ge 0$ be any given integers, $t>0$ and $x\in(0,1/2)$. We have
\begin{align*}
\exp&\left(\sum_{\ell=1}^{2m_1+1}\frac{B_{2\ell}(x)-B_{2\ell}}{\ell(2\ell-1)}\frac{1}{t^{2\ell-1}}\right)\\
&\qquad\qquad \qquad\qquad < \frac{\Gamma(t+1-x)\Gamma(t+x)}{\Gamma(t)\Gamma(t+1)}<  \exp\left(\sum_{\ell=1}^{2m_2}\frac{B_{2\ell}(x)-B_{2\ell}}{\ell(2\ell-1)}\frac{1}{t^{2\ell-1}}\right)
\end{align*}
and
$$\exp\left(\sum_{\ell=1}^{2m_1}\frac{B_{2\ell+1}(x)}{\ell(2\ell+1)}\frac{1}{t^{2\ell}}\right) < \frac{\Gamma(t+1-x)}{t^{1-2x}\Gamma(t+x)}< \exp\left(\sum_{\ell=1}^{2m_2+1}\frac{B_{2\ell+1}(x)}{\ell(2\ell+1)}\frac{1}{t^{2\ell}}\right).$$
\end{theorem}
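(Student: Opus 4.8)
The plan is to obtain both chains of inequalities directly from the positivity assertions of Corollary~\ref{cor1}, by unwinding the definitions \eqref{def:G} and \eqref{def:H} and tracking the sign factors $(-1)^{m-1}$ and $(-1)^m$ carefully. The essential point is that the parity of $m$ determines the direction of each inequality: an even value of $m$ produces one bound and an odd value the opposite one, so the two choices of parity furnish the upper and lower bounds in each chain.

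I would begin with the chain involving $H_m$. Corollary~\ref{cor1} asserts $H_m(x,t)>0$ for every integer $m\ge 0$ and every $x\in(0,1/2)$, which by \eqref{def:H} means
$$(-1)^m\left(\log\left(\frac{\Gamma(t+1-x)}{t^{1-2x}\Gamma(t+x)}\right)-\sum_{\ell=1}^m\frac{B_{2\ell+1}(x)}{\ell(2\ell+1)}\frac{1}{t^{2\ell}}\right)>0.$$
For even $m=2m_1$ the factor $(-1)^m$ equals $+1$, so the logarithm exceeds its partial sum; for odd $m=2m_2+1$ it equals $-1$, so the logarithm falls below its partial sum. Since $\exp$ is increasing, exponentiating these two statements yields exactly the lower and upper bounds displayed in the second inequality of the theorem. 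The chain involving $G_m$ is handled in the same way: $G_m(x,t)>0$ reads, via \eqref{def:G},
$$(-1)^{m-1}\left(\log\left(\frac{\Gamma(t+1-x)\Gamma(t+x)}{\Gamma(t)\Gamma(t+1)}\right)-\sum_{\ell=1}^m\frac{B_{2\ell}(x)-B_{2\ell}}{\ell(2\ell-1)}\frac{1}{t^{2\ell-1}}\right)>0,$$
and now the favourable parity is reversed, so that odd $m=2m_1+1$ gives the lower bound and even $m=2m_2$ gives the upper bound; exponentiating completes the first chain.

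The whole content of the theorem thus resides in Corollary~\ref{cor1}, which itself rests on Theorem~\ref{main} together with the integral representations of Proposition~\ref{propm}; what remains here is pure bookkeeping. I therefore expect no genuine obstacle. The only points demanding attention are matching each parity of $m$ to the correct direction of the inequality, and checking the boundary cases $m_1=0$ or $m_2=0$: under the empty-sum convention the relevant sum vanishes and the corresponding bound degenerates to $\exp(0)=1$, which is consistent with $H_0(x,t)>0$ and $G_0(x,t)>0$ respectively.
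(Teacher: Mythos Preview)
Your proposal is correct and follows essentially the same approach as the paper: the paper likewise rewrites the gamma ratios via \eqref{def:G} and \eqref{def:H} and then invokes the positivity of $G_m(x,t)$ and $H_m(x,t)$ from Corollary~\ref{cor1}, with the parity of $m$ determining the direction of each inequality. Your added remark on the empty-sum boundary cases $m_1=0$ and $m_2=0$ is a welcome clarification not made explicit in the original.
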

\begin{proof}Using the definitions \eqref{def:G} and \eqref{def:H}, we have
$$\frac{\Gamma(t+1-x)\Gamma(t+x)}{\Gamma(t)\Gamma(t+1)}=\exp\left(\sum_{\ell=1}^{m}\frac{B_{2\ell}(x)-B_{2\ell}}{\ell(2\ell-1)}\frac{1}{t^{2\ell-1}}-(-1)^mG_m(x,t)\right),$$
and
$$\frac{\Gamma(t+1-x)}{t^{1-2x}\Gamma(t+x)}=\exp\left(\sum_{\ell=1}^{m}\frac{B_{2\ell+1}(x)}{\ell(2\ell+1)}\frac{1}{t^{2\ell}}+(-1)^mH_m(x,t)\right)$$
hold for each integer $m\ge 0$. Therefore the proof follows from Corollary \ref{cor1}.
\end{proof}

\begin{theorem}\label{main1}
For each $m\in\zb_{\ge 0}$ and each $x\in[0,1/2]$, the functions $G_m(x,t)$ and $H_m(x,t)$ are completely monotonic on $t\in(0,\infty)$.
\end{theorem}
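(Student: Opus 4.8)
The plan is to read complete monotonicity directly off the Laplace-type integral representations of Proposition~\ref{propm}, using Theorem~\ref{main} to certify that the integrating densities are nonnegative, and then appealing to Bernstein's theorem (equivalently, differentiating under the integral sign). The substantive work has already been carried out in establishing Theorem~\ref{main} and Proposition~\ref{propm}; what remains is an essentially formal assembly.

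For $H_m(x,t)$ I would begin from \eqref{eq21}, writing
$$H_m(x,t)=\int_0^\infty\frac{2S_m(x,u)}{u}\,e^{-ut}\,du.$$
For every integer $m\ge 0$ and every $x\in[0,1/2]$, Theorem~\ref{main} gives $S_m(x,u)\ge 0$ for all $u>0$ (strictly positive on the open interval and vanishing at the endpoints $x\in\{0,1/2\}$). Hence $d\mu(u):=\frac{2S_m(x,u)}{u}\,du$ is a nonnegative measure on $(0,\infty)$ and $H_m(x,\cdot)$ is precisely its Laplace transform, so Bernstein's theorem yields complete monotonicity in $t$ at once. Equivalently, since $(-1)^n\partial_t^n\!\big[\tfrac{2S_m(x,u)}{u}e^{-ut}\big]=2S_m(x,u)\,u^{n-1}e^{-ut}\ge 0$, one may differentiate under the integral to obtain $(-1)^n\partial_t^nH_m(x,t)\ge 0$ for all $n$.

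For $G_m(x,t)$ I would start from \eqref{eq22} and interchange the order of integration by Fubini, obtaining
$$G_m(x,t)=\int_0^\infty\Big(2\int_0^xS_{m-1}(y,u)\,dy\Big)e^{-ut}\,du+(-1)^m\delta_{m0}\,x(1-x)\,\frac{1}{t}.$$
For $x\in[0,1/2]$ and $y\in(0,x)$ the integrand $S_{m-1}(y,u)$ is nonnegative by Theorem~\ref{main} (now in the range $L=m-1\ge -1$), so the bracketed inner integral is a nonnegative density and the first term is again the Laplace transform of a nonnegative measure, hence completely monotonic; alternatively one invokes the fact that an integral in $y$ of the completely monotonic functions $\int_0^\infty S_{m-1}(y,u)e^{-ut}\,du$ against the nonnegative weight $dy$ remains completely monotonic. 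The residual term occurs only for $m=0$ and equals $x(1-x)/t$ with $x(1-x)\ge 0$; writing $\tfrac1t=\int_0^\infty e^{-ut}\,du$ exhibits it as the Laplace transform of the nonnegative constant $u\mapsto x(1-x)$, so it is completely monotonic as well. Since a sum of completely monotonic functions is completely monotonic, this settles $G_m$.

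The only genuinely technical point, rather than a conceptual obstacle, is to justify differentiating under the integral (equivalently, the integrability needed for Bernstein's theorem), namely the convergence of $\int_0^\infty S_m(x,u)\,u^{n-1}e^{-ut}\,du$ for each $n\ge 0$ and $t>0$, and analogously for the $G_m$ density. This is routine: near $u=0$ the subtraction of the Taylor polynomial in the definition of $S_m$ forces $S_m(x,u)=O(u^{2m+2})$, so the integrand is integrable at the origin, while for large $u$ the quotient $\frac{\sinh[(x-1/2)u]}{2\sinh[u/2]}$ stays bounded and $S_m(x,u)$ grows only polynomially, dominated by $e^{-ut}$ with $t>0$. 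The endpoints $x\in\{0,1/2\}$ require no separate treatment: there the densities vanish identically (or reduce to the explicit $c/t$ term), and the stated conclusion persists.
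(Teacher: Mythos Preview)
Your proposal is correct and follows essentially the same route as the paper: invoke the integral representations \eqref{eq21}--\eqref{eq22} from Proposition~\ref{propm}, use Theorem~\ref{main} to guarantee the nonnegativity of $S_m(x,u)$ and $S_{m-1}(y,u)$, and conclude complete monotonicity either by Bernstein's theorem or by differentiating under the integral sign (the paper does the latter explicitly, without spelling out the convergence check you supply).
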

\begin{proof}Let $x\in[0,1/2]$ and $t>0$. Using Theorem \ref{main} and Proposition \ref{propm}, we have
\begin{align*}
(-1)^k\frac{\,d^k }{\,d t^k}H_m(x,t)&=2(-1)^k\int_{0}^{\infty}\frac{\,d^k }{\,d t^k}\left(\frac{S_{m}(x,u)}{u}e^{-ut}\right)\,du\\
&=2\int_{0}^{\infty}S_{m}(x,u)u^{k-1}e^{-ut}\,du\ge 0,
\end{align*}
and
\begin{align*}
(-1)^k\frac{\,d^k }{\,d t^k}G_m(x,t)=&2(-1)^k\int_{0}^x\,dy\int_{0}^{\infty}\frac{\,d^k }{\,d t^k}\left(S_{m-1}(y,u)e^{-ut}\right)\,du\\
&+(-1)^{m}\frac{k!\delta_{m0}}{t^{k+1}}x(1-x)\\
=&2\int_{0}^x\,dy\int_{0}^{\infty}S_{m-1}(y,u)e^{-ut}u^{k}\,du+(-1)^{m}\frac{k!\delta_{m0}}{t^{k+1}}x(1-x)\ge 0,
\end{align*}
hold for all integers $k,m\ge 0$. This completes the proof.

\end{proof}

\end{document}